\newtheorem{theorem}{Theorem}
\newtheorem{corollary}[theorem]{Corollary}
\newtheorem{proposition}[theorem]{Proposition}
\newtheorem{definition}[theorem]{Definition}
\newtheorem{remark}[theorem]{Remark}
\date{}
\begin{document}
\title{The Ganea conjecture for rational approximations of sectional category}
\author{J.G. Carrasquel-Vera\let\thefootnote\relax\footnote{This work was supported by FEDER through the Ministerio de Educaci\'on y Ciencia project MTM2010-18089.}}

\maketitle
\begin{abstract}
We give bounds for the module sectional category of products of maps which generalise a theorem of Jessup for Lusternik-Schnirelmann category. We deduce also a proof of a Ganea type conjecture for topological complexity. This is a first step towards proving the Ganea conjecture for topological complexity in the rational context.
\end{abstract}

 \vspace{0.5cm}
 \noindent{2010 \textit{Mathematics Subject Classification} : 55M30, 55P62.}\\
 \noindent{\textit{Keywords}: Rational homotopy, sectional category, topological complexity.}
 \vspace{0.2cm}

\maketitle

\section*{Introduction}
The sectional category of a map $f\colon X\rightarrow Y$, $\secat(f)$, defined by Schwarz in \cite{Schwarz66}, is the smallest $m$ such that there exist $m+1$ local homotopy sections for $f$ whose domains form an open cover of $Y$. One can point out two important cases of sectional category. The first one is the well known Lusternik-Schnirelmann (LS) category of a path-connected space $X$, which can be seen as the sectional category of the inclusion of the base point of $X$: \[\cat(X)=\secat(*\hookrightarrow X).\] More generally, if $f\colon X\rightarrow Y$ is a continuous map with homotopy fibre $i\colon F\rightarrow X$ then \[\cat(f)=\secat(i).\] The second one is the (higher) topological complexity, defined in \cite{Farber03} and generalised in \cite{Rudyak10}, \[\tc_n(X)=\secat(\Delta_n),\] where $\Delta_n\colon X\hookrightarrow X^n$ denotes the diagonal inclusion. It is known \cite{Farber03} that $\tc(X):=\tc_2(X)$ measures the motion planning complexity of a mechanical system for which $X$ is the configuration space.\\

Denote $S^k$ the $k$-dimensional sphere. Ganea conjectured in \cite{Ganea71} that $\cat(X\times S^k)=\cat(X)+\cat(S^k)$. This conjecture was found to be false by Iwase in \cite{Iwase98} but proven to be true for rational spaces. The latter was done in two steps. First Jessup proves the conjecture for a weaker invariant called module LS category (see below) \cite{Jessup90} \[\mcat(X\times S^k)=\mcat(X)+\mcat(S^k).\] Then Hess proved that, rationally, module LS category equals LS category \cite{He91} \[\cat(X)=\mcat(X).\]

The goal of this paper is to generalise Jessup's theorem to sectional category. In order to do so, we will use standard rational homotopy theory techniques. We will therefore always consider simply connected CW complexes with finite Betti numbers. In particular we denote by $X_0$ and $f_0$ the rationalisation of a space $X$ and a continuous map $f$. We denote $\apl(X)$ the commutative differential graded algebra (\cdga\ for short) of Sullivan's piecewise linear forms on $X$. The reader is referred to \cite{Bible} for the basis on rational homotopy theory.\\

Sectional category admits a nice presentation using the Ganea construction. Let $f\colon X\rightarrow Y$ be a continuous map. One can construct an \emph{$m$-Ganea map} for $f$, $G_m(f)\colon P^m(f)\rightarrow Y$, as $G_0(f)=f$ and \[G_m(f)=f*_Y\cdots *_Y f\colon X*_Y\cdots *_Y X\rightarrow Y,\] the iterated join $f$ with itself $m+1$ times \cite{Ma76}. When $Y$ admits a partition of the unity (for instance, when $Y$ is normal) the $m$-Ganea map can be used to \emph{glue together} $m+1$ local sections of $f$ so that $\secat(f)\le m$ if and only if $G_m(f)$ admits a homotopy section, \cite{Schwarz66}. Using $G_m(f)$ we obtain a characterisation of $\secat(f_0)$ and a definition of $\msecat(f)$. In fact, it follows directly from Sullivan's theory of minimal models \cite{Su77} that $\secat(f_0)\le m$ if and only if \[\apl(G_m(f))\colon \apl(Y)\rightarrow \apl(G_m(f))\] admits a homotopy retraction (see Section \ref{sec:ModSecat}) in the category of \cdga s \cite{Carrasquel10}.\\

\begin{definition}[\cite{FGKV06}]\ 
\begin{enumerate}
\item[(i)] The \emph{module sectional category} of a map $f$, $\msecat(f)$, is the smallest $m$ such that $\apl(G_m(f))$ admits a homotopy retraction in the category of $\apl(Y)$-modules \cite{FGKV06}. 
\item[(ii)] The module topological complexity of $X$ is $\mtc_n(X)=\msecat(\Delta_n)$.
\end{enumerate}
\end{definition}

In this paper we study the relations between $\msecat(f\times g)$, $\msecat(f)$ and $\msecat(g)$ being $g$ a map. In particular we prove

\begin{theorem}\label{th:main}
Suppose $f\colon X\rightarrow Y$ and $g\colon X'\rightarrow Y'$ are maps with $X_0$ a Poincar\'e duality complex. If $f_0$ and $g_0$ admit homotopy retractions then \[\msecat(f\times g)=\msecat(f)+\msecat(g).\]
\end{theorem}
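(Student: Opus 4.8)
The plan is to prove the two inequalities $\msecat(f\times g)\le \msecat(f)+\msecat(g)$ and $\msecat(f\times g)\ge \msecat(f)+\msecat(g)$ separately, working throughout in the rationalised, algebraic setting. I set $A=\apl(Y)$ and $A'=\apl(Y')$, model $f$ and $g$ by \cdga\ morphisms out of $A$ and $A'$, and use that $\apl(Y\times Y')\simeq A\otimes A'$ for finite type spaces. Everything is phrased through the characterisation recalled in the introduction: $\msecat(h)\le m$ if and only if $\apl(G_m(h))$ admits a retraction of modules over $\apl$ of its target. Write $p=\msecat(f)$ and $q=\msecat(g)$.

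For the upper bound, which needs neither Poincar\'e duality nor the retraction hypotheses, I would first record a product formula for the Ganea construction: up to homotopy there is a factorisation relating $G_{p+q}(f\times g)$ to the maps $G_p(f)$ and $G_q(g)$, the algebraic counterpart of the classical join inequality $\secat(f\times g)\le\secat(f)+\secat(g)$. Letting $r_f$ and $r_g$ be the given module retractions of $\apl(G_p(f))$ and $\apl(G_q(g))$, I would then build a retraction of $\apl(G_{p+q}(f\times g))$ over $A\otimes A'$ essentially as $r_f\otimes r_g$ transported along this factorisation, which gives $\msecat(f\times g)\le p+q$.

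The lower bound is the heart of the matter and is where the hypotheses enter. Here I would argue by obstruction theory: showing $\msecat(f\times g)\ge p+q$ amounts to showing that $\apl(G_{p+q-1}(f\times g))$ admits no module retraction, which I would detect by a nonzero obstruction class living in a relative $\mathrm{Ext}$, equivalently in the cohomology of an algebraic cofibre of the Ganea map. The two retraction hypotheses give, after applying $\apl$, homotopy sections of the \cdga\ morphisms modelling $f$ and $g$; in cohomology this makes $H^*(f)$ and $H^*(g)$ surjective and exhibits $\apl(X)$ and $\apl(X')$ as retracts of $A$ and $A'$. I would use these sections to transport the individual obstruction classes $o_f$ (of level $p$) and $o_g$ (of level $q$) into the product, and identify the level-$(p+q)$ obstruction for $f\times g$ with a product $o_f\cdot o_g$ via the same Ganea product factorisation used for the upper bound. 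The Poincar\'e duality of $X_0$ supplies a fundamental class and a non-degenerate pairing on $H^*(X)$; pairing against it converts the assertion $o_f\cdot o_g\neq 0$ into a non-degeneracy statement that the duality guarantees, so the product obstruction does not vanish and the retraction at level $p+q-1$ is genuinely obstructed.

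The main obstacle is precisely this non-vanishing of the product obstruction $o_f\cdot o_g$: cup-length-type lower bounds for sectional category are only subadditive in general, and a naive product can drop. The entire role of the hypotheses is to prevent this collapse: Poincar\'e duality on $X_0$ provides the non-degenerate top pairing that detects the $f$-factor (which is why only the source of $f$ must be a duality complex), while the homotopy retractions guarantee that the relevant classes on $X$ and $X'$ actually lift to $Y$ and $Y'$ and hence survive in $H^*(Y\times Y')$. Making the product factorisation of the Ganea construction compatible, at the chain level, with both the module structure and the Poincar\'e pairing is the delicate computation; this is the sectional-category analogue of the key step in Jessup's proof for module LS category, and I expect it to require the most care.
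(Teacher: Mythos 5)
Your upper bound agrees in substance with the paper's. There it is Proposition \ref{prop:SubAddSc}: instead of a geometric factorisation of the Ganea construction, the paper works with surjective models and observes that if $K_1,K_2$ are the kernels and $L=\ker(\varphi_1\otimes\varphi_2)$, then $L^{m+n+1}\subset K_1^{m+1}\otimes A_2+A_1\otimes K_2^{n+1}$, so the quotient map $\rho_{m+n}$ for the product factors through $\rho_m\otimes\rho_n$, and the relative lifting lemma finishes the job. One caveat: the retraction hypotheses on $f_0$ and $g_0$ are not as dispensable here as you claim, since they are exactly what lets $\msecat$ of the maps be computed as $\amsecat$ of the models (the models must admit sections); without them one must use the more awkward $\msecat(\varphi)$ of Definition 5, and the paper does not assert the upper bound in that generality.

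The genuine gap is in your lower bound. Your plan rests on (a) an obstruction class for the existence of a module homotopy retraction at each level, (b) an identification of the level-$(p+q)$ obstruction for $f\times g$ with a product $o_f\cdot o_g$, and (c) non-vanishing of that product via the Poincar\'e pairing; none of these is constructed, and you yourself flag (c) as the ``delicate computation'' --- but (b) and (c) together \emph{are} the theorem, and nothing in the proposal prevents the collapse you acknowledge can happen (the paper's final Remark, via Stanley's examples, shows additivity genuinely fails without the duality hypothesis). There is also a structural problem: an argument that detects obstructions purely through cohomology classes and pairings can, at best, bound the homological invariant $\hsecat(f\times g)$ from below, which would give $\msecat(f\times g)\ge\hsecat(f)+\hsecat(g)$ and would then require Poincar\'e duality on \emph{both} factors to conclude. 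The whole point of the paper's argument is the mixed inequality $\amsecat(\varphi_1\otimes\varphi_2)\ge\amsecat(\varphi_1)+\ahsecat(\varphi_2)$, which keeps full module-retraction strength on one factor while using only a cohomology class on the other. Its proof is elementary and involves no obstruction calculus: pick a cycle $\omega\in K_2^{\,n}$, $n=\ahsecat(\varphi_2)$, whose class survives in $H^*(A_2)$; the $(A_1,d)$-module maps $\alpha(a)=a\otimes\omega$ and its retraction $\beta$ satisfy $\alpha(K_1^{m+1})\subset L^{m+n+1}$, so any module retraction $r$ for the product at level $m+n$ pulls back to one for $\varphi_1$ at level $m$ via $\beta\circ r\circ\tilde{\alpha}$. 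Poincar\'e duality then enters only as a black box, through the cited theorem of \cite{Carrasquel14b} that $\msecat=\hsecat$ when the base of the map is a Poincar\'e duality complex, applied to a single factor to close the sandwich $\msecat(f)+\hsecat(g)\le\msecat(f\times g)\le\msecat(f)+\msecat(g)$; no fundamental class or pairing is ever manipulated in this paper. In effect your plan amounts to re-proving that cited theorem inside the product situation, with no mechanism supplied, and your parenthetical justification that duality on the \emph{source} of $f$ is what the pairing argument needs does not correspond to how the hypothesis is actually consumed.
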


We deduce:

\begin{corollary}
If $X_0$ is a Poincar\'e duality complex, then \[\mtc_n(X\times Y)=\mtc_n(X)+\mtc_n(Y).\]
\end{corollary}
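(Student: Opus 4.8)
The plan is to recognise $\mtc_n(X\times Y)$ as the module sectional category of a product of two maps and then invoke Theorem \ref{th:main}. By definition $\mtc_n(X\times Y)=\msecat\big(\Delta_n^{X\times Y}\big)$, where $\Delta_n^{X\times Y}\colon X\times Y\to (X\times Y)^n$ is the iterated diagonal. First I would exploit the canonical shuffle homeomorphism $\sigma\colon (X\times Y)^n\xrightarrow{\ \cong\ }X^n\times Y^n$ which reorders the coordinates. A direct check on points shows $\sigma\circ\Delta_n^{X\times Y}=\Delta_n^X\times\Delta_n^Y$, where $\Delta_n^X\colon X\to X^n$ and $\Delta_n^Y\colon Y\to Y^n$ are the diagonals of the two factors. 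Since module sectional category is unchanged when the target is composed with a homeomorphism (indeed with any homotopy equivalence), this yields $\mtc_n(X\times Y)=\msecat(\Delta_n^X\times\Delta_n^Y)$.

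Next I would verify the hypotheses of Theorem \ref{th:main} for the pair $f=\Delta_n^X$ and $g=\Delta_n^Y$. The domain of $f$ is $X$, and $X_0$ is a Poincar\'e duality complex by assumption, so the duality hypothesis holds. It remains to produce homotopy retractions of $f_0$ and $g_0$; in fact both maps admit honest retractions before rationalisation, since any coordinate projection $\mathrm{pr}_1\colon X^n\to X$ satisfies $\mathrm{pr}_1\circ\Delta_n^X=\mathrm{id}_X$, and likewise for $Y$. Rationalising, $(\Delta_n^X)_0$ and $(\Delta_n^Y)_0$ inherit homotopy retractions, so all the assumptions of the theorem are met.

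Finally I would apply Theorem \ref{th:main} to conclude
\[\msecat(\Delta_n^X\times\Delta_n^Y)=\msecat(\Delta_n^X)+\msecat(\Delta_n^Y)=\mtc_n(X)+\mtc_n(Y),\]
and combine this with the identification of the first paragraph to obtain $\mtc_n(X\times Y)=\mtc_n(X)+\mtc_n(Y)$. Since the deduction rests entirely on the theorem, there is no serious obstacle; the only point demanding care is the compatibility $\sigma\circ\Delta_n^{X\times Y}=\Delta_n^X\times\Delta_n^Y$ together with the invariance of $\msecat$ under $\sigma$, which is precisely what allows the asymmetric Poincar\'e duality hypothesis, required only on the domain of $f$, to be placed on the single factor $X$.
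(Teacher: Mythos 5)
Your proposal is correct and is essentially the paper's own (implicit) deduction: the paper obtains the corollary from Theorem \ref{th:main} by exactly this identification of $\Delta_n^{X\times Y}$ with $\Delta_n^X\times\Delta_n^Y$ under the coordinate shuffle, with the homotopy retraction hypothesis supplied by the coordinate projections $\mathrm{pr}_1\circ\Delta_n=\mathrm{id}$. Your added care about the invariance of $\msecat$ under composing with the shuffle homeomorphism, and about the Poincar\'e duality hypothesis landing on the factor $X$ as required by the theorem, simply makes explicit what the paper's one-line ``We deduce'' leaves to the reader.
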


\section{Module sectional category and products}\label{sec:ModSecat}
We will start with a brief recall of some content of \cite{Carrasquel10} and \cite{Carrasquel14c} that will be used later on. Throughout this paper we will work with commutative differential graded algebras over $\mQ$ whose differential increases the degree. Given a $\cdga$ $(A,d)$, an $(A,d)$-module is a graded differential $\mQ$-vector space $(M,d)$ with a degree $0$ action of $A$ verifying $d(ax)=d(a)x+(-1)^{\mbox{deg}(a)}ad(x)$. A \emph{homotopy retraction} of \cdga\ (resp. $(A,d)$-module) for a \cdga\ morphism  $\psi\colon (A,d)\rightarrow (B,d)$ is a \cdga\ (resp. $(A,d)$-module) morphism $r\colon (A\otimes\Lambda V,D)\rightarrow (A,d)$ such that $r\circ i=\id_A$ where $i\colon (A,d)\cofib (A\otimes\Lambda V,D)$ is a relative Sullivan model for $\psi$.

\begin{definition}
Let $\varphi\colon (A,d)\rightarrow (B,d)$ be a surjective morphism of \cdga s and consider the projection \[\rho_m\colon (A,d)\rightarrow \pa\frac{A}{(\ker\ \varphi)^{m+1}},\ol{d}\pb.\] Define:
\begin{itemize}
\item[(i)] $\asecat(\varphi)$ the smallest $m$ such that $\rho_m$ admits a homotopy retraction of \cdga.
\item[(ii)] $\amsecat(\varphi)$ the smallest $m$ such that $\rho_m$ admits a homotopy retraction of $(A,d)$-module.
\item[(iii)] $\ahsecat(\varphi)$ the smallest $m$ such that $\rho_m$ is homology injective.
\end{itemize}
\end{definition}

In order to give topological consequences to our algebraic results we will use the main theorem of \cite{Carrasquel14c}, which reads
\begin{theorem}
Let $f$ be a map modelled by the \cdga\ morphism $\varphi$. If $\varphi$ admits a section then:
\begin{itemize}
\item[i)] $\secat(f_0)=\asecat(\varphi)$.
\item[ii)] $\msecat(f)=\amsecat(\varphi)$.
\item[iii)] $\hsecat(f)=\ahsecat(\varphi)$.
\end{itemize}
\end{theorem}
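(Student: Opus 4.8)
The plan is to reduce all three statements to a single comparison: the rationalised $m$-Ganea map $\apl(G_m(f))\colon \apl(Y)\to\apl(P^m(f))$ is weakly equivalent, as a morphism of \cdga s, to the truncation projection $\rho_m\colon (A,d)\to (A/I^{m+1},\overline{d})$, where $I=\ker\varphi$. Granting this, the excerpt already supplies the topological half of each equivalence: $\secat(f_0)\le m$ exactly when $\apl(G_m(f))$ admits a \cdga\ retraction, $\msecat(f)\le m$ exactly when it admits an $\apl(Y)$-module retraction, and $\hsecat(f)\le m$ exactly when $\apl(G_m(f))$ is injective on cohomology. Transporting each of these conditions along the weak equivalence then reproduces precisely $\asecat(\varphi)$, $\amsecat(\varphi)$ and $\ahsecat(\varphi)$, which is the content of (i), (ii) and (iii).

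The heart of the argument is therefore the comparison lemma, which I would prove by induction on $m$. For $m=0$ there is nothing to do, since $G_0(f)=f$ is modelled by $\varphi$ and $\rho_0$ is the projection onto $A/I\cong B$, i.e.\ $\varphi$ itself. For the inductive step I would exploit the defining presentation of the Ganea construction: $P^m(f)=P^{m-1}(f)*_Y X$ is the homotopy pushout of the projections $P^{m-1}(f)\leftarrow P^{m-1}(f)\times_Y X\to X$. Applying $\apl$ turns this homotopy pushout into a homotopy pullback of \cdga s (the Eilenberg--Moore comparison being an isomorphism here, as all spaces are simply connected of finite type), so that $\apl(P^m(f))$ is the homotopy fibre product of $A/I^{m}$ and $B=A/I$ over a model of $P^{m-1}(f)\times_Y X$, the latter being the derived tensor product $A/I^{m}\otimes^{L}_{A}A/I$. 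The section hypothesis on $\varphi$ is what I would use to put these models in convenient form: surjectivity of $\varphi$ gives $B\cong A/I$, while the splitting furnished by the section controls the Tor-contributions to the derived tensor product, so that the homotopy pullback collapses to the honest quotient $A/I^{m+1}$, compatibly with the projections. This identifies the rationalised $G_m(f)$ with $\rho_m$ and closes the induction.

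It then remains to verify that the three properties of a morphism are genuinely invariant under weak equivalence of morphisms. Existence of a \cdga\ retraction of the cofibrant replacement is such an invariant, by the standard lifting argument in the model category of \cdga s, giving (i). For (ii) one uses that $A$ and $\apl(Y)$ are linked by quasi-isomorphisms and that the induced restriction and extension of scalars form a Quillen equivalence between the respective module categories, so that an $\apl(Y)$-module retraction exists precisely when an $(A,d)$-module retraction does; this yields $\msecat(f)=\amsecat(\varphi)$. Finally (iii) is immediate, homology being a weak-equivalence invariant, so that $H(\apl(G_m(f)))$ and $H(\rho_m)$ are injective for exactly the same values of $m$.

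The step I expect to be the main obstacle is the inductive modelling of the fibrewise join, and specifically the derived-tensor computation showing that the homotopy pullback of $A/I^{m}$ and $A/I$ reproduces $A/I^{m+1}$ on the nose rather than up to a correction term. Making this precise requires a careful choice of relative Sullivan models so that the quasi-isomorphisms assembled along the induction are natural enough to carry retraction data across, and it is exactly here that the existence of a section for $\varphi$ becomes indispensable.
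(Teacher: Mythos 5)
Your proof hinges on a comparison lemma that is false: $\rho_m$ is \emph{not} weakly equivalent, as a morphism of \cdga s, to $\apl(G_m(f))$ — not even when $\varphi$ admits a section. Take $f\colon *\hookrightarrow \mathbb{C}P^2$, so that $G_m(f)$ is the classical Ganea fibration and the surjective model $\varphi\colon A=(\mathbb{Q}[x]/(x^3),0)\rightarrow \mathbb{Q}$ (with $I=(x)$) admits the unit as a section. Here $P^1(f)=\,*\,*_{Y}*\,\simeq \Sigma\Omega\mathbb{C}P^2$, and since $\Omega\mathbb{C}P^2\simeq_{\mathbb{Q}}S^1\times\Omega S^5$, the cohomology $H^*(P^1(f);\mathbb{Q})$ is infinite-dimensional (nonzero in degrees $0,2,5,6,9,10,\dots$), whereas $A/I^2$ has total dimension $2$ (degrees $0$ and $2$). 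This kills your inductive step exactly where you anticipated trouble: the derived tensor $A/I\otimes^{L}_{A}A/I$ has unbounded Tor (its cohomology is $H^*(\Omega\mathbb{C}P^2;\mathbb{Q})$), and those Tor classes do not cancel in the homotopy pullback — they are precisely the suspended classes above — so the pullback does not collapse to $A/I^{m+1}$, and no section of $\varphi$ can help, since one exists in this example. The same computation shows that already in Félix--Halperin's theorem the truncation $\Lambda V/\Lambda^{>m}V$ is not a model of the Ganea space $P^m(Y)$: that theorem, like the one at hand, asserts an equivalence of \emph{retraction-existence problems}, which is strictly weaker than a weak equivalence of morphisms. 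For the same reason your claim that (iii) is ``immediate'' fails: homology injectivity of $\rho_m$ and of $\apl(G_m(f))$ are conditions on genuinely different maps, and their agreement is part of the content of the theorem.

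For what it is worth, the paper itself does not reprove this statement — it quotes it from \cite{Carrasquel14c} — and the actual argument there runs along different lines, which indicate how a correct proof must go. The honest modelling statement available is not for the join over $Y$ but for the Ganea-type construction over the product: the projection $(A^{\otimes m+1},d)\rightarrow (A^{\otimes m+1}/K^{\otimes m+1},\overline{d})$ does model the join of $m+1$ copies of $f$ pulled back over $Y^{m+1}$ (this is the join model of \cite{FGKV06}; the Tor corrections that destroyed your induction vanish there because the kernel factors sit in distinct tensor slots). One then characterises $\secat(f_0)\le m$ (and the module and homology variants) by the existence of an extension of the multiplication $\mu\colon A^{\otimes m+1}\rightarrow A$ along a relative model of this projection — exactly the shape of the definition of $\msecat(\varphi)$ displayed in Section 1 of the paper — and the section $\sigma$ of $\varphi$ is used to construct explicit, asymmetric comparison morphisms transferring retraction data in both directions between this lifting problem and the one-variable truncation $\rho_m$. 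Repairing your argument therefore means abandoning the collapse lemma entirely and rebuilding the proof around the $A^{\otimes m+1}/K^{\otimes m+1}$ model and such transfer maps.
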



We now prove the \emph{sub-additivity} of $\asecat$ type invariants for the tensor product of morphisms of \cdga s. 
\begin{proposition}\label{prop:SubAddSc}
Let $\varphi_i\colon (A_i,d)\longrightarrow (B_i,d)$, $i=1,2$, be surjective \cdga\ morphisms. Then
\begin{itemize}
\item[i)] $\asecat(\varphi_1\otimes\varphi_2)\le\asecat(\varphi_1)+\asecat(\varphi_2).$
\item[ii)] $\amsecat(\varphi_1\otimes\varphi_2)\le\amsecat(\varphi_1)+\amsecat(\varphi_2).$
\item[iii)] $\ahsecat(\varphi_1\otimes\varphi_2)\le\ahsecat(\varphi_1)+\ahsecat(\varphi_2).$
\end{itemize}
\end{proposition}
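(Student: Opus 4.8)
The plan is to deduce all three inequalities uniformly from a single inclusion of ideals, after which the three cases differ only in how one propagates the relevant structure through a tensor product. Write $K_i=\ker\varphi_i$ and $K=\ker(\varphi_1\otimes\varphi_2)$. Since the ground field is $\mQ$ and the $\varphi_i$ are surjective, $K=K_1\otimes A_2+A_1\otimes K_2$; setting $I_1=K_1\otimes A_2$ and $I_2=A_1\otimes K_2$, these are ideals with $I_1^{i}I_2^{j}=K_1^i\otimes K_2^j$, and hence $K^n=\sum_{i+j=n}K_1^i\otimes K_2^j$, with the convention $K_i^0=A_i$.

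The key step, which I expect to be the only genuinely load-bearing point, is the following inclusion. Taking $n=m_1+m_2+1$, every pair $(i,j)$ with $i+j=n$ satisfies $i\ge m_1+1$ or $j\ge m_2+1$, so that $K^{m_1+m_2+1}\subseteq K_1^{m_1+1}\otimes A_2+A_1\otimes K_2^{m_2+1}$. Combined with the canonical isomorphism $A/(K_1^{m_1+1}\otimes A_2+A_1\otimes K_2^{m_2+1})\cong (A_1/K_1^{m_1+1})\otimes(A_2/K_2^{m_2+1})$, this inclusion produces a surjective \cdga\ morphism $\pi$ realising the factorisation $\rho^{(1)}_{m_1}\otimes\rho^{(2)}_{m_2}=\pi\circ\rho_{m_1+m_2}$, where $\rho^{(i)}$ is the projection attached to $\varphi_i$ and $\rho$ that attached to $\varphi_1\otimes\varphi_2$.

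Now fix $m_i=\asecat(\varphi_i)$ (resp.\ $\amsecat(\varphi_i)$), and let $r_i\colon (A_i\otimes\Lambda V_i,D)\rightarrow(A_i,d)$ be homotopy retractions of $\rho^{(i)}_{m_i}$ of \cdga\ (resp.\ $(A_i,d)$-module) type. Over $\mQ$ the tensor product $(A_1\otimes A_2)\otimes\Lambda(V_1\oplus V_2)$ is a relative Sullivan model for $\rho^{(1)}_{m_1}\otimes\rho^{(2)}_{m_2}$, and $r_1\otimes r_2$ is a retraction of it of the same type, since a tensor product of \cdga\ morphisms (resp.\ of $A_1$- and $A_2$-module morphisms) is again a \cdga\ (resp.\ $(A_1\otimes A_2)$-module) morphism. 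Thus $\rho^{(1)}_{m_1}\otimes\rho^{(2)}_{m_2}=\pi\circ\rho_{m_1+m_2}$ admits a homotopy retraction. Building a relative Sullivan model of this composite by first choosing one for $\rho_{m_1+m_2}$ and then extending it by a relative model of $\pi$, the retraction restricts to a retraction of the sub-inclusion modelling $\rho_{m_1+m_2}$, of \cdga\ (resp.\ module) type because both that inclusion and the retraction are. Hence $\asecat(\varphi_1\otimes\varphi_2)\le m_1+m_2$ (resp.\ for $\amsecat$), proving (i) and (ii).

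Part (iii) is the easiest: if $\rho^{(i)}_{m_i}$ are homology injective, then so is $\rho^{(1)}_{m_1}\otimes\rho^{(2)}_{m_2}$, by the K\"unneth isomorphism together with the fact that a tensor product of injective $\mQ$-linear maps is injective; as this map factors as $\pi\circ\rho_{m_1+m_2}$, the map $\rho_{m_1+m_2}$ is itself homology injective, giving $\ahsecat(\varphi_1\otimes\varphi_2)\le m_1+m_2$. The remaining checks---the decomposition of $K$, that tensoring preserves relative Sullivan models, and that the restricted map is still a morphism of the relevant type---are routine in characteristic zero, so the whole proposition rests on the kernel-power inclusion of the second paragraph.
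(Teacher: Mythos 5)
Your proposal is correct and takes essentially the same route as the paper: the load-bearing step in both is the kernel-power inclusion $L^{m+n+1}\subseteq K_1^{m+1}\otimes A_2+A_1\otimes K_2^{n+1}$ (where $L=\ker(\varphi_1\otimes\varphi_2)$) together with the resulting factorisation of $\rho_1\otimes\rho_2$ through the projection attached to $\varphi_1\otimes\varphi_2$, with retractions tensored over $\mQ$ and case (iii) handled by K\"unneth. The only cosmetic difference is that where the paper transfers the tensored retraction to a model of the quotient by one application of the relative lifting lemma \cite[Prop.~12.9]{Bible}, you build the relative model of $\rho_1\otimes\rho_2$ as a two-stage extension and restrict the retraction --- a step that tacitly uses the same lifting lemma, via the independence of ``admits a homotopy retraction'' from the choice of relative Sullivan model, to identify your tensor-product model with the composed one.
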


\begin{proof}
Denote $K_i=\ker \varphi_i$, $i=1,2$ and $L=\ker\ \varphi_1\otimes\varphi_2$. Since $L=K_1\otimes A_2+A_1\otimes K_2$, then for all $m,n\ge 1$  we have that $L^{m+n+1}\subset K_1^{m+1}\otimes A_2+A_1\otimes K_2^{n+1}$. This induces commutative diagram
\[\xymatrix{
&A_1\otimes A_2\ar[dl]_{\rho_3}\ar[dr]^{\rho_1\otimes\rho_2} &\\
\frac{A_1\otimes A_2}{L^{m+n+1}}\ar[rr]& &\frac{A_1}{K_1^{m+1}}\otimes \frac{A_2}{K_2^{n+1}},\\
}\]
which combined with the relative lifting lemma \cite[Prop. 12.9]{Bible} establishes the proposition. 
\end{proof}

\begin{corollary}
If $f$ and $g$ are continuous maps such that $f_0$ and $g_0$ admit homotopy retractions, then
\begin{itemize}
\item[i)] $\secat(f_0\times g_0)\le\secat(f_0)+\secat(g_0).$
\item[ii)] $\msecat(f\times g)\le\msecat(f)+\msecat(g).$
\item[iii)] $\hsecat(f\times g)\le\hsecat(f)+\hsecat(g).$
\end{itemize}
\end{corollary}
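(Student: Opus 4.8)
The plan is to deduce each inequality directly from Proposition \ref{prop:SubAddSc} by passing to algebraic models and then translating back via the theorem of \cite{Carrasquel14c} recalled above, which identifies the topological invariants $\secat(f_0)$, $\msecat(f)$, $\hsecat(f)$ with their algebraic counterparts $\asecat$, $\amsecat$, $\ahsecat$ of a suitable \cdga\ model of $f$.

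First I would fix \emph{surjective} \cdga\ models $\varphi\colon(A_1,d)\rightarrow(B_1,d)$ of $f$ and $\psi\colon(A_2,d)\rightarrow(B_2,d)$ of $g$; such surjective models always exist, by replacing $\apl(f)$ and $\apl(g)$ by surjective morphisms through relative Sullivan models. The substantive point is then to recognise $\varphi\otimes\psi$ as a \cdga\ model of $f\times g$: this rests on the natural quasi-isomorphism $\apl(X\times X')\simeq\apl(X)\otimes\apl(X')$, on its compatibility with the maps induced by $f$ and $g$, and on the fact that rationalisation commutes with finite products, so that $(f\times g)_0=f_0\times g_0$. Since a tensor product of surjective $\mQ$-linear maps is again surjective, $\varphi\otimes\psi$ is surjective, and hence the invariants $\asecat(\varphi\otimes\psi)$, $\amsecat(\varphi\otimes\psi)$, $\ahsecat(\varphi\otimes\psi)$ are all defined.

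Next I would verify the hypothesis of the theorem. Because $\apl$ is contravariant, the assumption that $f_0$ and $g_0$ admit homotopy retractions translates precisely into the statement that $\varphi$ and $\psi$ admit sections $s_1$ and $s_2$. Then $s_1\otimes s_2$ is a section of $\varphi\otimes\psi$, since $(\varphi\otimes\psi)\circ(s_1\otimes s_2)=(\varphi s_1)\otimes(\psi s_2)=\id$. Thus the theorem applies simultaneously to $f$, to $g$, and to $f\times g$, giving $\secat(f_0)=\asecat(\varphi)$, $\secat(g_0)=\asecat(\psi)$ and $\secat(f_0\times g_0)=\asecat(\varphi\otimes\psi)$, together with the analogous equalities for the module and homology versions.

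Finally I would combine the two inputs: Proposition \ref{prop:SubAddSc}(i) yields $\asecat(\varphi\otimes\psi)\le\asecat(\varphi)+\asecat(\psi)$, and substituting the equalities above gives (i); parts (ii) and (iii) follow in the same way from items (ii) and (iii) of the proposition. I do not expect a genuine obstacle here, since the argument is purely a matter of feeding the proposition into the model-theoretic dictionary. The only step demanding real care is the identification of $\varphi\otimes\psi$ as a model of $f\times g$, together with the transfer of surjectivity and of the section to the tensor product; all of these are standard consequences of the multiplicativity of $\apl$ and of the behaviour of rationalisation under products.
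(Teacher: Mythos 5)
Your proposal is correct and follows exactly the route the paper intends: the corollary is stated as an immediate consequence of Proposition \ref{prop:SubAddSc} combined with the translation theorem of \cite{Carrasquel14c}, applied to surjective models $\varphi$, $\psi$ of $f$, $g$ whose tensor product $\varphi\otimes\psi$ models $f\times g$ and inherits a section $s_1\otimes s_2$. Your care about the tensor product being a surjective model with a section is precisely the (unwritten) content the paper relies on, so there is nothing to add.
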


Now recall that there is a general procedure to compute $\msecat(f)$ from any surjective model $\varphi\colon (A,d)\rightarrow (B,d)$ even if it does not have a section. First we introduce the invariant $\msecat(\varphi)$ for a surjective morphism with kernel $K$.
\begin{definition}
The \emph{module sectional category} of $\varphi$ is the smallest integer $m$ such that exists a morphism of $(A^{\otimes m+1},d)$-modules \[r\colon\pa A^{\otimes m+1}\otimes\Lambda W,D\pb\rightarrow (A,d)\] making commutative the diagram
\[\xymatrix@C=2cm{
(A^{\otimes m+1},d)\ar@{>->}[r]^-j\ar[d]_{\mu}&\pa A^{\otimes m+1}\otimes\Lambda W,D\pb \ar[dl]^-r\\
(A,d), &\\
}\]
where $j$ is a relative Sullivan model for the projection $(A^{\otimes m+1},d)\rightarrow\pa \frac{A^{\otimes m+1}}{K^{\otimes m+1}},\ol{d}\pb$ and $\mu$ is the multiplication morphism.
\end{definition}
By \cite{Carrasquel10}, one has that $\msecat(f)=\msecat(\varphi)$, for any surjective model $\varphi$ of $f$. Recall that the nilpotency of an ideal $I$ is defined as the greatest integer $m$ such that $I^{m+1}\ne \ca 0\cb$. We can now prove

\begin{proposition}
Let $\varphi_i\colon (A_i,d)\rightarrow (B_i,d)$, $i=1,2$, be surjective \cdga\ morphisms. Then
\begin{itemize}
\item[i)] $\amsecat(\varphi_1\otimes\varphi_2)\ge \amsecat(\varphi_1)+\ahsecat(\varphi_2).$
\item[ii)] $\msecat(\varphi_1\otimes\varphi_2)\ge \msecat(\varphi_1)+\nil \ker\ H(\varphi_2).$
\end{itemize}
\end{proposition}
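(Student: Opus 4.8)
The plan is to prove both inequalities by a single mechanism: turn the homological information carried by $\varphi_2$ into a cocycle $c$ lying in a power of $K_2=\ker\varphi_2$ and representing a nonzero class of $H(A_2)$, and then use \emph{multiplication by $c$} together with a linear functional dual to $[c]$ to push a retraction for $\varphi_1\otimes\varphi_2$ down to a retraction for $\varphi_1$, dropping the filtration level by the weight of $c$. First I would extract $c$. For (i), set $t=\ahsecat(\varphi_2)$; then $\rho^{(2)}_{t-1}\colon A_2\to A_2/K_2^{t}$ is not injective in homology, so the long exact sequence of $0\to K_2^{t}\to A_2\to A_2/K_2^{t}\to 0$ provides a cocycle $c\in K_2^{t}$ with $0\neq[c]\in H(A_2)$. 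For (ii), set $k=\nil\ker H(\varphi_2)$ and choose classes of $\ker H(\varphi_2)$ with nonzero $k$-fold product; since $\varphi_2$ is surjective each factor is represented by a cocycle of $K_2$, so their product is a cocycle $c\in K_2^{k}$ with $0\neq[c]\in H(A_2)$. In either case I write $w$ for the weight, so $c\in K_2^{w}$ with $w\in\{t,k\}$.

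Two elementary observations drive the argument. On one hand, the degree $|c|$ map $\beta_0\colon A_1\to A_1\otimes A_2$, $a\mapsto a\otimes c$, is an $A_1$-linear cochain map sending $K_1^{\,p}$ into $K_1^{\,p}\otimes K_2^{\,w}\subseteq L^{\,p+w}$, where $L=\ker(\varphi_1\otimes\varphi_2)=K_1\otimes A_2+A_1\otimes K_2$; thus it shifts the nilpotent filtration by exactly $w$. On the other hand, since $[c]\neq 0$ there is a degree $-|c|$ linear functional $\lambda\colon A_2\to\mQ$ that vanishes on coboundaries and satisfies $\lambda(c)=1$; then $\pi=\id_{A_1}\otimes\lambda\colon A_1\otimes A_2\to A_1$ is an $A_1$-linear cochain map with $\pi\circ\beta_0=\id_{A_1}$. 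It is worth stressing that this is the only place where one might expect to need Poincar\'e duality, yet a single functional dual to one nonzero class suffices, which is why the proposition needs no duality hypothesis.

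Now let $N=\amsecat(\varphi_1\otimes\varphi_2)$ and fix an $A_1\otimes A_2$-module retraction $r$ of the relative model $i_{12}$ of $\rho_N$. A short homology argument (the cocycle $1\otimes c\in L^{w}$ shows $\ahsecat(\varphi_1\otimes\varphi_2)\ge w$, and $\amsecat\ge\ahsecat$ since a module retraction is homology injective) gives $N\ge w$, so $N-w\ge 0$. By the filtration shift, $\beta_0$ descends to $\ol\beta_0\colon A_1/K_1^{\,N-w+1}\to (A_1\otimes A_2)/L^{\,N+1}$. Restricting scalars along $A_1\hookrightarrow A_1\otimes A_2$ and applying the relative lifting lemma \cite[Prop. 12.9]{Bible}, I would lift $\ol\beta_0$ to an $A_1$-module map $\widetilde\beta$ between the relative models, arranged to cover $\beta_0$ on the base (so $\widetilde\beta\circ i_1=i_{12}\circ\beta_0$, where $i_1$ is the relative model of $\rho^{(1)}_{N-w}$). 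The composite $\pi\circ r\circ\widetilde\beta$ is then an $A_1$-module map from the relative model of $\rho^{(1)}_{N-w}$ to $A_1$ whose restriction to the base equals $\pi\circ\beta_0=\id_{A_1}$ (using $r\circ i_{12}=\id$); here the degree $|c|$ of $\beta_0$ is cancelled by the degree $-|c|$ of $\lambda$, so this is a genuine degree-zero retraction. Hence $\amsecat(\varphi_1)\le N-w=N-t$, which is exactly (i).

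For (ii) the same three ingredients are used inside the tensor-power/multiplication presentation of $\msecat$. With $N=\msecat(\varphi_1\otimes\varphi_2)$, I replace $\beta_0$ by the map $\beta\colon A_1^{\otimes N-k+1}\to (A_1\otimes A_2)^{\otimes N+1}$ placing $a_i\otimes 1$ in the first $N-k+1$ slots and the $k$ cocycle factors of $c$, as elements of $A_1\otimes K_2$, in the remaining $k$ slots; this map is semilinear over the unital inclusion $\iota$ that fills the extra slots by $1$, sends $K_1^{\otimes N-k+1}$ into $L^{\otimes N+1}$, and satisfies $\mu_{12}\circ\beta=\beta_0\circ\mu_1$, whence $\pi\circ\mu_{12}\circ\beta=\mu_1$. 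Lifting $\ol\beta$ through the relative model $j$ of the projection for $\varphi_1\otimes\varphi_2$ as $A_1^{\otimes N-k+1}$-modules and composing with $r$ and $\pi$ produces, via $r\circ j=\mu_{12}$, an $A_1^{\otimes N-k+1}$-module retraction $r'$ with $r'\circ j'=\mu_1$, so $\msecat(\varphi_1)\le N-k$, giving (ii). The main obstacle is precisely this adaptation: one must keep every map equivariant for the correct \emph{restricted} module structures as they pass through the two multiplication morphisms and the two kernel conventions ($K^{m+1}$ for $\amsecat$ versus $K^{\otimes m+1}$ for $\msecat$), and check that $\pi$ stays linear for the action induced by $\mu_{12}\circ\iota$. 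Verifying these compatibilities, rather than any deep homotopy-theoretic input, is where the real work lies.
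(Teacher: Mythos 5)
Your proposal is correct and takes essentially the same route as the paper: extract a cocycle $c\in K_2^{w}$ with $[c]\neq 0$ (the paper's $\omega$, resp.\ $\omega_1\cdots\omega_n$), use $a\mapsto a\otimes c$ (the paper's $\alpha$, resp.\ $\gamma$) to shift the nilpotency filtration by $w$, lift through the relative models, and compose with the given retraction and a projection back to $A_1$ --- your functional $\lambda$ with $\lambda(c)=1$ vanishing on coboundaries is exactly the dual formulation of the paper's decomposition $A_2=\mathbb{Q}\cdot c\oplus M$, $d(M)\subseteq M$, with $\beta$. The small checks you add (the long-exact-sequence extraction of $c$, the verification that $N-w\ge 0$, and the semilinearity bookkeeping in part (ii)) are details the paper leaves implicit, not a departure in method.
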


\begin{proof}
We begin proving i). Recall the notation from the proof of Proposition \ref{prop:SubAddSc} and suppose $\ahsecat(\varphi_2)=n$, then there must exist a cycle $\omega\in K_2^n$ representing a non-zero class of $H^*(A_2,d)$. One can therefore decompose $A_2=\mQ\cdot\omega\oplus M$ with $d(M)\subset M$ and \[A_1\otimes A_2=(A_1\otimes\mQ\cdot\omega)\oplus (A_1\otimes M).\] Now define the map $\alpha\colon A_1\rightarrow A_1\otimes A_2$ as $\alpha(a):=a\otimes\omega$. Observe that $\alpha$ is an $(A_1,d)$-module morphism of degree $|\omega|$. Define also the $(A_1,d)$-module morphism $\beta\colon A_1\otimes A_2\rightarrow A_1$ as $\beta(\omega):=1$ and $\beta(M)=0$. It is obvious that $\beta$ is a retraction for $\alpha$ and that $\alpha(K_1^{m+1})\subset L^{m+n+1}$. Then $\alpha$ induces a commutative $(A_1,d)$-module diagram
\[\xymatrix{
A_1\ar[dd]_{\rho_1}\ar@{>->}[dr]^{j_1}\ar[rrr]^-\alpha& & & A_1\otimes A_2\ar[dd]^{\rho_3}\ar@{>->}[dl]_{j_3}\\
&\bullet\ar[dl]^{\simeq}\ar@{-->}[r]_{\tilde{\alpha}}&\bullet\ar[dr]_{\simeq}&\\
\frac{A_1}{K^{m+1}}\ar[rrr]_-{\overline{\alpha}}&&&\frac{A_1\otimes A_2}{L^{m+n+1}},\\ 
}\]
where $j_1$ and $j_3$ are relative models for $\rho_1$ and $\rho_3$, respectively, and where $\tilde{\alpha}$ is induced by \cite[Prop. 6.4]{Bible}. If $j_3$ admits a retraction as $(A_1\otimes A_2)$-module, $r$, then $\beta\circ r\circ\tilde{\alpha}$ is a retraction for $j_1$.\\

Let us now prove ii). If $\nil \ker\ H(\varphi_2)=n$ then there exist cycles $\omega_1,\ldots,\omega_n\in \ker \varphi_2$ such that $[\omega_1\cdots \omega_n]\ne 0$ in $H(A_2)$. We can therefore define an $(A^{\otimes m+1}_1,d)$-module morphism $\gamma\colon A_1^{\otimes m+1}\rightarrow (A_1\otimes A_2)^{\otimes m+n+1}$ as $\alpha(x):=x\otimes \omega_1\otimes\cdots\otimes\omega_n\in A_1^{\otimes m+1}\otimes A_2^{\otimes n}$. Let also $\alpha$ and $\beta$ be the maps of previous case taking $\omega:=\omega_1\cdots\omega_n$. Since $\gamma(K_1^{\otimes m+1})\subset L^{\otimes m+n+1}$, we have a commutative $(A_1^{\otimes m+1},d)$-module diagram

\[\xymatrix@C=2cm{
\frac{A_1^{\otimes m+1}}{K_1^{\otimes m+1}}\ar[r]^-{\overline{\gamma}}&\frac{(A_1\otimes A_2)^{\otimes m+n+1}}{L^{\otimes m+n+1}}\\
A_1^{\otimes m+1}\ar[u]\ar[d]_\mu\ar[r]^-\gamma&(A_1\otimes A_2)^{\otimes m+n+1\ar[u]\ar[d]^\mu}\\
A_1\ar[r]_\alpha&A_1\otimes A_2.\\
}\]
and the result follows in a similar way as i).
\end{proof}

\begin{corollary}
Let $f$ and $g$ be continuous maps then \[\msecat(f)+\nil\ \ker\ H^*(g,\mQ)\le \msecat(f\times g).\] Moreover, if $f_0$ and $g_0$ admit homotopy retractions, then \[\msecat(f)+\hsecat(g)\le\msecat(f\times g)\le\msecat(f)+\msecat(g).\]
\end{corollary}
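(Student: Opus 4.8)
The plan is to obtain this corollary as the topological shadow of the preceding Proposition, reading its two algebraic inequalities through the main theorem of \cite{Carrasquel14c}. Two auxiliary facts will be used repeatedly: by \cite{Carrasquel10} one has $\msecat(f)=\msecat(\varphi)$ for \emph{any} surjective \cdga\ model $\varphi$ of $f$, and the tensor product of surjective models of $f$ and $g$ is a surjective model of $f\times g$.

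To establish the first inequality I would fix surjective models $\varphi_1$ of $f$ and $\varphi_2$ of $g$, so that $\varphi_1\otimes\varphi_2$ models $f\times g$. Since $\varphi_2$ models $g$, the map it induces in cohomology is $H^*(g,\mQ)$, hence $\ker\ H(\varphi_2)=\ker\ H^*(g,\mQ)$. Part ii) of the Proposition then yields \[\msecat(f\times g)=\msecat(\varphi_1\otimes\varphi_2)\ge\msecat(\varphi_1)+\nil\ \ker\ H(\varphi_2)=\msecat(f)+\nil\ \ker\ H^*(g,\mQ),\] which is the unconditional assertion.

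For the sharper statement I would first record that $f_0$ (resp. $g_0$) admitting a homotopy retraction is exactly the condition that its model $\varphi_1$ (resp. $\varphi_2$) admits a section $s_1$ (resp. $s_2$); then $s_1\otimes s_2$ is a section of $\varphi_1\otimes\varphi_2$, so $f\times g$ also falls under the hypothesis of the main theorem of \cite{Carrasquel14c}. That theorem identifies $\msecat(f)=\amsecat(\varphi_1)$, $\hsecat(g)=\ahsecat(\varphi_2)$ and $\msecat(f\times g)=\amsecat(\varphi_1\otimes\varphi_2)$. Part i) of the Proposition now gives the lower bound \[\msecat(f\times g)=\amsecat(\varphi_1\otimes\varphi_2)\ge\amsecat(\varphi_1)+\ahsecat(\varphi_2)=\msecat(f)+\hsecat(g),\] while the upper bound is the sub-additivity recorded in the Corollary to Proposition \ref{prop:SubAddSc}.

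I do not anticipate a real obstacle: the substantive work sits in the preceding Proposition, and what remains is bookkeeping. The only points deserving care are the correspondence between homotopy retractions of the rationalised maps and sections of their \cdga\ models, and the compatibility of models and sections with the tensor product; once these are in place, the displayed identifications assemble directly into the claimed chain of inequalities.
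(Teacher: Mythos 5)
Your proposal is correct and is precisely the derivation the paper intends: the unconditional inequality follows from part ii) of the Proposition via the model-independence $\msecat(f)=\msecat(\varphi)$ from \cite{Carrasquel10}, and the conditional chain follows from part i) together with the main theorem of \cite{Carrasquel14c} (sections of models tensoring to a section of the product model) and the sub-additivity corollary to Proposition \ref{prop:SubAddSc}. The paper states this corollary without proof as an immediate consequence, and your bookkeeping of the translation between homotopy retractions of $f_0$, $g_0$ and sections of $\varphi_1$, $\varphi_2$ fills in exactly the intended details.
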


In \cite{Carrasquel14b} it was proven that if the base space of a map $g$ is a Poincar\'e duality complex, then $\msecat(g)=\hsecat(g)$. This implies Theorem \ref{th:main}.

\begin{remark}
Stanley in \cite{Stanley02} gives an example of two maps $f,g$ such that $\cat(f_0\times g_0)<\cat(f_0)+\cat(g_0)$. By taking homotopy cofibres we get examples of maps for which $\secat(f_0\times g_0)<\secat(f_0)+\secat(g_0)$.
\end{remark}

\section{Applications to topological complexity}
Recall from \cite{Carrasquel14b} that, if $X$ is a Poincar\'e duality complex, then $\htc_n(X)=\mtc_n(X)$, and $\htc_n(X):=\hsecat(\Delta_n)$. We can then deduce 
\begin{theorem}
Let $X, Y$ be spaces, then 
\[\mtc_n(X)+\htc_n(Y)\le \mtc(X\times Y)\le \mtc_n(X)+\mtc_n(Y).\] Moreover, if $Y$ is  Poincar\'e duality complex, then \[\mtc_n(X\times Y)=\mtc_n(X)+\mtc_n(Y).\]
\end{theorem}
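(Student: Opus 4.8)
The plan is to reduce the statement to the Corollary immediately preceding it by identifying the diagonal of a product with the product of the diagonals. First I would observe that the shuffle map
\[\sigma\colon (X\times Y)^n\xrightarrow{\ \cong\ } X^n\times Y^n,\qquad ((x_1,y_1),\ldots,(x_n,y_n))\mapsto ((x_1,\ldots,x_n),(y_1,\ldots,y_n)),\]
is a homeomorphism and that $\sigma\circ\Delta_n^{X\times Y}=\Delta_n^X\times\Delta_n^Y$, where $\Delta_n^Z\colon Z\hookrightarrow Z^n$ denotes the diagonal of a space $Z$. Since the Ganea construction, and hence $\msecat$, is invariant under post-composition with a homeomorphism (the homeomorphism induces an isomorphism of the relevant models), this gives
\[\mtc_n(X\times Y)=\msecat(\Delta_n^{X\times Y})=\msecat(\Delta_n^X\times\Delta_n^Y).\]

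Next I would check that the hypotheses of the Corollary hold for $f=\Delta_n^X$ and $g=\Delta_n^Y$. Each diagonal $\Delta_n^Z$ admits a strict retraction, namely the projection of $Z^n$ onto any one factor; rationalising, it follows that $(\Delta_n^X)_0$ and $(\Delta_n^Y)_0$ admit homotopy retractions. Applying the Corollary therefore yields
\[\msecat(\Delta_n^X)+\hsecat(\Delta_n^Y)\le\msecat(\Delta_n^X\times\Delta_n^Y)\le\msecat(\Delta_n^X)+\msecat(\Delta_n^Y).\]
Translating through the definitions $\mtc_n(Z)=\msecat(\Delta_n^Z)$ and $\htc_n(Z)=\hsecat(\Delta_n^Z)$ produces the two-sided estimate
\[\mtc_n(X)+\htc_n(Y)\le\mtc_n(X\times Y)\le\mtc_n(X)+\mtc_n(Y).\]

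For the equality I would invoke the result recalled from \cite{Carrasquel14b} at the start of this section: when $Y$ is a Poincar\'e duality complex one has $\htc_n(Y)=\mtc_n(Y)$. Substituting this into the lower bound makes it coincide with the upper bound, forcing $\mtc_n(X\times Y)=\mtc_n(X)+\mtc_n(Y)$.

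I expect the only genuinely non-formal step to be the identification $\sigma\circ\Delta_n^{X\times Y}=\Delta_n^X\times\Delta_n^Y$ together with the invariance of $\msecat$ under the homeomorphism $\sigma$; once this is in place the theorem is an immediate specialisation of the preceding Corollary, with the Poincar\'e duality hypothesis entering only to collapse the lower and upper bounds into an equality.
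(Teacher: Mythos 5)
Your proposal is correct and follows essentially the same route as the paper, which derives this theorem directly from the preceding Corollary applied to $f=\Delta_n^X$, $g=\Delta_n^Y$ together with the recalled fact from \cite{Carrasquel14b} that $\htc_n(Y)=\mtc_n(Y)$ for Poincar\'e duality complexes. The paper leaves the identification $\Delta_n^{X\times Y}\simeq\Delta_n^X\times\Delta_n^Y$ via the shuffle homeomorphism and the existence of retractions for the diagonals (projection onto a factor) tacit, and you have merely made these routine steps explicit.
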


In particular we extend \cite[Theorem 1.6]{Jessup12}, \[\mtc_n(X\times S^k)=\mtc_n(X)+\mtc_n(S^k).\]

\bibliography{bibliography.bib}{}
\bibliographystyle{plain}

\vspace{2cm}
\noindent Institut de Recherche en Math\'ematique et Physique,\\
Universit\'e catholique de Louvain,\\
2 Chemin du Cyclotron,\\
1348 Louvain-la-Neuve, Belgium.\\
E-mail: \texttt{jose.carrasquel@uclouvain.be}
\end{document}